\definecolor{linkblue}{named}{Blue}
\title{\MakeUppercase{Anagram-Free Chromatic Number is not Pathwidth-Bounded}%
   \thanks{This work was partly funded by NSERC and the Ontario Ministry of
    Research, Innovation and Science}}
\author{Paz Carmi%
   \thanks{Department of Computer Science,
           Ben-Gurion University of the Negev}\quad%
   Vida Dujmović%
   \thanks{School of Computer Science and Electrical Engineering, 
           University of Ottawa}\quad%
   Pat Morin%
   \thanks{School of Computer Science, Carleton University}}%
\DeclareMathOperator{\pw}{pw}
\begin{document}
\maketitle
\begin{abstract}
  The anagram-free chromatic number is a new graph parameter
  introduced independently by Kamčev, Łuczak, and Sudakov
  \cite{kamcev.luczak.ea:anagram-free} and Wilson and Wood
  \cite{wilson.wood:anagram-free}.  In this note, we show that there
  are planar graphs of pathwidth 3 with arbitrarily large anagram-free
  chromatic number.  More specifically, we describe $2n$-vertex planar
  graphs of pathwidth 3 with anagram-free chromatic number $\Omega(\log n)$.
  We also describe $kn$ vertex graphs with pathwidth $2k-1$ having
  anagram-free chromatic number in $\Omega(k\log n)$.
\end{abstract}
%
%

\section{Introduction}
\pagenumbering{arabic}

A string $s=s_1,\ldots,s_{2k}$ is called an \emph{anagram} if
$s_1,\ldots,s_k$ is a permutation of $s_{k+1},\ldots,s_{2k}$.
For a graph $G$, a $c$-colouring $\varphi:V(G)\to\{1,\ldots,c\}$ is
\emph{anagram-free} if, for every odd-length path $v_1,v_2,\ldots,v_{2k}$
in $G$, the string $\varphi(v_1),\ldots,\varphi(v_{2k})$ is not an
anagram.  The \emph{anagram-free chromatic number} of $G$, denoted
$\pi_\alpha(G)$, is the smallest value of $c$ for which $G$ has an
anagram-free $c$-colouring.

Answering a long-standing question of Erd\H{o}s and Brown, Ker\"anen
\cite{keranen:abelian} showed that the path $P_n$ on $n$ vertices has an
anagram-free 4-colouring.  A straightforward divide-and-conquer
algorithm applied to any $n$-vertex graph of treewidth $k$ yields
an anagram-free $O(k\log n)$-colouring.  The same divide-and-conquer
algoritm, applied to graphs that exclude a fixed minor gives an anagram
free $O(\sqrt{n})$-colouring \cite{kamcev.luczak.ea:anagram-free}.
An interesting variant of this divide-and-conquer algorithm is used by
Wilson and Wood \cite{wilson.wood:anagram-free} to obtain anagram-free
$(4k+1)$-colourings of trees of pathwidth $k$.  On the negative side,
Kamčev, Łuczak, and Sudakov \cite{kamcev.luczak.ea:anagram-free} and
Wilson and Wood \cite{wilson.wood:anagram-free} have shown that there
are trees---even binary trees---with arbitrarily large anagram-free
chromatic number.  These results, and some others, are summarized in
\tabref{results}.

\begin{table}
  \begin{center}
    \begin{tabular}{lll}
      \textbf{Graph class} & \textbf{Bounds} & \textbf{Reference} \\ \hline
       Paths & $\pi_\alpha(G)= 4$ & \cite[Theorem~1]{keranen:abelian}  \\
       Graphs of treewidth $k$ & $\pi_\alpha(G)\in O(k\log n)$ & folklore  \\
       Graphs excluding a minor of size $h$ & $\pi_\alpha(G)\in O(h^{3/2}n^{1/2})$ 
              & \cite[Proposition~1.2]{kamcev.luczak.ea:anagram-free} \\
       Trees & $\pi_\alpha(G)\in\Omega(\log n/\log\log n)$ 
              & \cite[Theorem~3]{wilson.wood:anagram-free} \\
       Trees of pathwidth $k$ & $k\le \pi_\alpha(G)\le 4k+1$ 
              & \cite[Theorem~5]{wilson.wood:anagram-free} \\
       Trees of radius $r$ & $r\le \pi_\alpha(G)\le r+1$ 
              & \cite[Theorem~4]{wilson.wood:anagram-free} \\
       Binary trees & $\pi_\alpha(G)\in\Omega(\sqrt{\log n/\log\log n})$ 
              & \cite[Proposition~1.1]{kamcev.luczak.ea:anagram-free} \\ 
       $4$-regular graphs & $\pi_\alpha(G)\in \Omega(\sqrt{n}/\log n)$ 
           & \cite[Proposition~3.1]{kamcev.luczak.ea:anagram-free} \\
       $d$-regular graphs & $\pi_\alpha(G)\in \Omega(n)$ & \cite[Theorem~1.3]{kamcev.luczak.ea:anagram-free} \\
       Subdivisions of graphs & $\pi_\alpha(G) \le 8$ 
         & \cite[Theorem~6]{wilson.wood:anagram-free2} \\
       Planar graphs & $\pi_\alpha(G)\in O(\sqrt{n})$ &
              \cite[Corollary~2.3]{kamcev.luczak.ea:anagram-free} \\
       Planar graphs of maximum degree 3 & $\pi_\alpha(G)\in\Omega(\log n/\log\log n)$ 
         & \cite[Proposition~2.4]{kamcev.luczak.ea:anagram-free} \\
           & & \cite[Theorem~1]{wilson.wood:anagram-free} \\
       Planar graphs of pathwidth $3$ & $\pi_\alpha(G) \in \Omega(\log n)$
         & \thmref{main} \\
       Graphs of pathwidth $k>3$ & $\pi_\alpha(G) \in \Omega(k\log n)$
         & \thmref{main-2} 
    \end{tabular}
  \end{center}
  \caption{Bounds on anagram-free chromatic number.  Upper bounds apply
  to all graphs in the class. Lower bounds apply to some graphs in
  the class.}
  \tablabel{results}
\end{table}

All of the examples of graphs having large anagram-free chromatic
number are graphs with large pathwidth \cite{robertson.seymour:graph}.
Therefore, an obvious question is whether anagram-free chromatic number
is pathwidth-bounded, i.e., can $\pi_\alpha(G)$ be upper bounded by some
function of the pathwidth $\pw(G)$ of $G$?  Such a result seems plausible,
for two reasons:
\begin{enumerate}
  \item pathwidth is a measure of how path-like a graph is and Ker\"anen
     showed that paths have anagram-free 4-colourings; and
  \item the result of Wilson and Wood \cite{wilson.wood:anagram-free}
     shows that $\pi_\alpha(T)\le 4\pw(T)+1$ for every tree, $T$.
\end{enumerate}
The purpose of this note, however, is to show that the result of Wilson
and Wood can not be strengthened even to planar graphs of pathwidth 3
and maximum degree 5. (Here and throughout, $\log x=\log_2 x$ denotes the binary logarithm of $x$.)

\begin{thm}\thmlabel{main}
  For every $n\in\N$, there exists a $2n$-vertex planar graph of
  pathwidth 3 and maximum degree 5 whose anagram-free chromatic number
  is at least $\log(n+1)$.
\end{thm}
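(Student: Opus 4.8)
I would look for a family of planar graphs $G_h$ with $|V(G_h)|\le 2^{h+1}$, pathwidth $3$, and maximum degree $5$, and prove $\pi_\alpha(G_h)\ge h+1$; writing $n$ for $|V(G_h)|/2$ and padding to reach exactly $2n$ vertices then gives the bound $\log(n+1)$. To keep the three structural parameters under control while the chromatic lower bound grows, the natural design is hierarchical: fix a constant-size planar gadget $G_1$ with two distinguished ``ports'', and build $G_h$ from two copies of $G_{h-1}$ by identifying the right port of the first with the left port of the second and wiring in one more constant-size ``junction'' gadget across that shared port. Chaining copies this way roughly doubles the vertex count at each step (hence the $2^h$), and the whole construction is just a long path of such junctions, which is what makes the structural claims cheap.

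\textbf{Structural properties.} Because $G_h$ is the concatenation of the two copies of $G_{h-1}$ through a single shared port, a width-$3$ path decomposition of $G_h$ is obtained by concatenating width-$3$ path decompositions of the two copies, keeping the shared port and the $O(1)$ junction vertices in the boundary bags; so $\pw(G_h)\le 3$ follows by induction, and one checks that some junction contains a $K_4$-minor so that equality holds. Planarity is inherited by drawing the two copies in disjoint half-planes meeting only at the shared port and drawing the junction in a small disc there. The only genuinely delicate bookkeeping is the degree bound: the shared port absorbs incident edges from both copies and from the junction, so the gadget must be designed to leave each port with at most two ``outward'' incident edges --- this is exactly the constraint that pins the gadget down and explains why the bound is $5$ and not smaller.

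\textbf{The lower bound (the real content).} I would prove, by induction on $h$, the self-strengthened statement that $G_h$ contains a distinguished path $R_h$ (a ``rib'') on which every anagram-free colouring uses at least $h+1$ colours; the base case $h=1$ uses that anagram-free colourings are proper (a single edge is an odd-length path, an anagram exactly when its endpoints agree) plus a finite check on $G_1$. For the inductive step, take $R_h$ to be (rib of the first copy)$\,\cdot\,$(junction)$\,\cdot\,$(reversed rib of the second copy) and suppose some anagram-free $\varphi$ used at most $h$ colours on $R_h$: then the induction hypothesis forces both sub-ribs to carry exactly the same $h$-element colour \emph{set}, and the junction would be engineered so that this forces them also to carry the same colour \emph{multiset}, whereupon the colour string along $R_h$ is an anagram --- a contradiction. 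I expect the main obstacle to be precisely this upgrade from equal sets to equal multisets: a bare ``$\ge h+1$ colours'' hypothesis is too weak, so the inductive invariant will have to pin the rib's colour multiset into a controlled family (and the two sub-ribs will have to be taken of equal length, with a mirror-symmetric, colour-balanced junction), and producing a self-propagating invariant of this kind while simultaneously respecting planarity, pathwidth $3$, and degree $5$ is where the work lies.
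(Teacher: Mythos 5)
There is a genuine gap: what you have written is a plan whose decisive step is absent. Everything rests on a junction gadget ``engineered so that'' equal colour \emph{sets} on the two sub-ribs force equal colour \emph{multisets}, and you neither exhibit such a gadget nor give any reason one can exist --- indeed you flag this yourself as ``where the work lies.'' As stated it is implausible: multiset equality on two sub-ribs of length roughly $2^h$ is a global constraint on how often each of the $h$ colours appears far from the junction, while a constant-size junction interacts with only $O(1)$ vertices and so cannot control those counts. (This set-to-multiset loss is exactly why the hierarchical, tree-like constructions in the earlier lower bounds for trees and binary trees stall at $\Omega(\log n/\log\log n)$-type bounds; your scheme inherits the same obstruction.) Even granting multiset equality on the sub-ribs, the path $R_h$ also passes through the junction vertices, so you would additionally need the junction's own colours to split evenly between the two halves of the string; and the base gadget $G_1$, the width-$3$ decomposition with ports confined to boundary bags, and the degree-$5$ bookkeeping are all asserted rather than constructed.

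The paper's proof avoids recursion entirely and supplies precisely the mechanism you are missing. It uses one explicit ladder-like graph on $x_1,\ldots,x_n,y_1,\ldots,y_n$ with edges $x_iy_i$, $x_iy_{i+1}$, $x_{i+1}y_i$ (planar, pathwidth $3$, maximum degree $5$), and gets the anagram globally: a pigeonhole on the parity vectors of the $n+1$ even-length prefixes of the interleaved colour string (\lemref{parity}) produces a block of rungs $i,\ldots,j$ in which every colour occurs an even number of times, and a separate combinatorial lemma (\lemref{splitting}) then chooses, within each rung $\{x_\ell,y_\ell\}$, which vertex goes to the outgoing half so that the two halves carry identical multisets. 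The cross edges make every such per-rung choice realizable as the single path $x_i,\ldots,x_j,y_j,\ldots,y_i$, whose colour string is then an anagram. In other words, the upgrade from ``balanced parities'' to an actual anagram comes from the swap symmetry of the rungs combined with a global counting argument, not from any local junction; to salvage your approach you would need an invariant of comparable global strength, and nothing in the proposal provides it.
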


\begin{thm}\thmlabel{main-2}
  For every $n\in\N$ and every integer $k\ge 3$, there exists a
  $kn$-vertex graph of pathwidth $2k-1$ and maximum degree $3k-1$
  whose anagram-free chromatic number is at least $(k-2)\log(n/3)$.
\end{thm}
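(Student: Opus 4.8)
The plan is to prove \thmref{main-2} by ``stacking'' copies of the gadget behind \thmref{main} onto a single common spine. Write $G_n$ for the $2n$-vertex, pathwidth-$3$, maximum-degree-$5$ graph of \thmref{main}, and regard it as a spine $L_0$ (a path on $n$ vertices) together with one ``gadget layer'' $L_1$ (another $n$ vertices, each contributing at most $3$ edges incident to the spine and keeping at most two of its own vertices simultaneously in a sweeping bag). To build $G_{n,k}$ I would keep one spine $L_0$ and attach $k-1$ pairwise vertex-disjoint copies $L_1,\dots,L_{k-1}$ of the gadget layer, so that $|V(G_{n,k})|=n+(k-1)n=kn$. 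The extra leverage over \thmref{main} is that any two gadget layers $L_j,L_{j'}$ are now joined through the shared spine: one can leave $L_j$ at a spine vertex, walk along $L_0$, and enter $L_{j'}$, so $G_{n,k}$ contains many odd-length paths that traverse a gadget of $L_j$ followed by a gadget of $L_{j'}$. Planarity is no longer required, and is genuinely lost once $k$ is large, which is precisely what lets all $k-1$ layers share a single spine.

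The quantitative bookkeeping is then routine. A path decomposition of width $2k-1$ is obtained by sweeping along $L_0$: at each point the bag holds the two active spine vertices together with the at most two active vertices of each of the $k-1$ gadget layers, i.e.\ $2k$ vertices in all, and the layers are processed exactly as in the width-$3$ decomposition of $G_n$. For the maximum degree, a gadget-layer vertex keeps the degree it had in $G_n$ (at most $5\le 3k-1$), while a spine vertex has its two spine neighbours plus at most $3$ edges into each of the $k-1$ gadget layers, for degree at most $2+3(k-1)=3k-1$.

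For the lower bound, let $\varphi$ be an anagram-free $c$-colouring of $G_{n,k}$. The first step is to run the argument behind \thmref{main} separately inside each pair $(L_0,L_j)$; because the $k-1$ layers must coexist on one spine the accounting loses a constant factor, and one still obtains, for every $j$, a set $A_j$ of colours \emph{witnessed inside layer $L_j$} with $|A_j|\ge\log(n/3)$. The crucial second step is that these colour sets are essentially disjoint: if a colour $a$ belonged to both $A_j$ and $A_{j'}$ with $j\ne j'$, then a path witnessing $a$ in $L_j$ and a matching path witnessing $a$ in $L_{j'}$ could be spliced through the shared spine into one odd-length path whose colour string has its second half a permutation of its first half, i.e.\ an anagram, contradicting anagram-freeness of $\varphi$. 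Allowing for boundary losses at the two extreme layers, at least $k-2$ of the $A_j$ are pairwise disjoint, so $c\ge\bigl|\bigcup_j A_j\bigr|\ge (k-2)\log(n/3)$.

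The hard part is this second step together with the way it feeds back into the first. Making ``$a\in A_j\cap A_{j'}$ yields an anagram'' actually work requires the witnesses to be matching colour \emph{strings}, not just single colours, so the gadget layer has to be engineered so that it forces a rich, controllable family of coloured paths that can be cross-spliced with matching length and matching colour multiset; and this splicing must be possible simultaneously for all pairs $j\ne j'$ without the connectors pushing the pathwidth past $2k-1$ or the degree past $3k-1$, and without spoiling the internal forcing that produces the $A_j$ in the first place. Balancing these competing demands is exactly what costs the two lost layers and the factor $3$ in $\log(n/3)$; I expect this design-and-verify step, rather than any single clever trick, to be where essentially all of the work lies.
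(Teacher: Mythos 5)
Your construction is not the paper's (the paper thickens the path itself: vertex classes $V_1,\ldots,V_n$ of size $k$, each a clique and completely joined to the next, giving pathwidth $2k-1$ and degree $3k-1$ directly), but the real problem is that your lower-bound argument has a genuine gap at exactly the two places you flag as ``the hard part.'' First, the claim that each layer $L_j$ carries its own colour set $A_j$ with $|A_j|\ge\log(n/3)$ does not follow from the argument behind \thmref{main}: that argument only shows that the \emph{total} number of colours appearing on $L_0\cup L_j$ is at least $\log(n+1)$, and nothing prevents almost all of those colours from living on the shared spine $L_0$. In that scenario every $A_j$ could a priori be of constant size, and $\sum_j|A_j|$ gives nothing like $(k-2)\log(n/3)$. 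Second, the disjointness step is asserted, not proved: ``a path witnessing $a$ in $L_j$'' is not a well-defined object that yields half of an anagram, and splicing two such segments through the spine inserts spine vertices whose colours enter the string, so there is no mechanism forcing the concatenation to be an anagram. You acknowledge that engineering the gadgets so that cross-layer splicing produces matching colour multisets is where ``essentially all of the work lies''--- but that work is the theorem, and it is missing; the bookkeeping about pathwidth, degree, and vertex count is the easy part and matches the target parameters, but the combinatorial core is absent.

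For comparison, the paper avoids per-layer colour accounting entirely. Its graph is the ``thick path'' described above, and the heart of the proof is \lemref{marriage}: if $n$ colour sets $X_1,\ldots,X_n$ of size $k$ are drawn from an alphabet of size less than $(k-2)\log(n/3)$, then some consecutive block admits subsets $X_\ell'\subseteq X_\ell$ of size at least $2$ in which every colour occurs an even number of times (proved by a Hall-type augmenting-path argument in one case and a recursive counting argument in the other---this is where the factors $k-2$ and $n/3$ actually come from, not from discarding two boundary layers). A red/blue labelling, a bichromatic matching, and \lemref{splitting} then turn the even block into two equal colour multisets, and the complete joins between consecutive classes let one route a single out-and-back path realizing the anagram---no cross-gadget splicing through a shared spine is ever needed. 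If you want to salvage your shared-spine construction, you would need a new argument controlling how spine colours interact with layer colours; as written, the proposal does not prove the stated bound.
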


These two results show that the straightforward divide-and-conquer
algorithm using separators gives asymptotically worst-case optimal
colourings for graphs of pathwidth $k$ and graphs of treewidth $k$.

\section{Proof of \thmref{main}}

Let $s\in\Sigma^*$ be a string over some alphabet $\Sigma$.  For each
$a\in\Sigma$, we let $n_a(s)$ denote the number of occurences of $a$
in $s$.  We say that $s$ is \emph{even} if $n_a(s)$ is even for each
$a\in\Sigma$.  The following lemma says that strings with no even
substrings must use an alphabet of at least logarithmic size.

\begin{lem}\lemlabel{parity}
  If $s=s_0,\ldots,s_{2n-1}\in\Sigma^{2n}$ and $|\Sigma|< \log(n+1)$,
  then $s$ contains a non-empty even substring $s_{2i},\ldots,s_{2j-1}$
  for some $0\le i < j\le n$.
\end{lem}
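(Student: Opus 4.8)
The plan is the standard pigeonhole argument on prefix parity vectors. For each $i$ with $0\le i\le n$, let $p_i\in\{0,1\}^\Sigma$ be the vector whose $a$-th coordinate is $n_a(s_0,\ldots,s_{2i-1})\bmod 2$; in particular $p_0$ is the all-zero vector, corresponding to the empty prefix. This produces $n+1$ vectors, all lying in the set $\{0,1\}^\Sigma$, which has exactly $2^{|\Sigma|}$ elements.

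Since $|\Sigma|$ is an integer with $|\Sigma|<\log(n+1)$, we have $2^{|\Sigma|}<2^{\log(n+1)}=n+1$. Hence the $n+1$ vectors $p_0,\ldots,p_n$ cannot all be distinct, and the pigeonhole principle gives indices $0\le i<j\le n$ with $p_i=p_j$.

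It then remains to check that such a collision yields the desired even substring. For every $a\in\Sigma$,
\[
  n_a(s_{2i},\ldots,s_{2j-1})
   = n_a(s_0,\ldots,s_{2j-1})-n_a(s_0,\ldots,s_{2i-1})
   \equiv p_j(a)-p_i(a)=0 \pmod 2 ,
\]
so every symbol occurs an even number of times in $s_{2i},\ldots,s_{2j-1}$; moreover this substring has length $2(j-i)>0$, hence is non-empty, which is exactly what is claimed.

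I do not expect a genuine obstacle here. The only points worth stating carefully are: (i) the strict inequality $2^{|\Sigma|}<n+1$, which uses that $|\Sigma|$ is an integer strictly below $\log(n+1)$; and (ii) the indexing convention of splitting $s$ only at even positions, which is precisely what forces $i<j\le n$ (rather than merely $\le 2n$) and gives a substring of even length, as required by the later application of the lemma.
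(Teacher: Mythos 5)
Your proposal is correct and follows essentially the same route as the paper: parity vectors of the even-length prefixes, a pigeonhole collision among the $n+1$ vectors in $\{0,1\}^\Sigma$ forced by $2^{|\Sigma|}<n+1$, and the observation that the difference of two equal parity vectors yields an even, non-empty substring $s_{2i},\ldots,s_{2j-1}$. (The integrality of $|\Sigma|$ is not actually needed for the strict inequality; monotonicity of $x\mapsto 2^x$ already gives $2^{|\Sigma|}<n+1$.)
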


\begin{proof}
  For any string $q\in\Sigma^*$, we define the \emph{parity vector}
  $P(q)=\langle n_a(q)\bmod 2: a\in\Sigma \rangle$ and observe that $q$ is
  even if and only if $P(q)=\langle 0,\ldots,0\rangle$.   Furthermore, for
  two strings $p$ and $q$, the parity vector of their concatenation $pq$
  is equal to the xor-sum (i.e., modulo 2 sum) of their parity vectors:
  \[
     P(pq) = P(p)\oplus P(q) \enspace .
  \] 
  Define the strings $t_0,\ldots,t_n$, where $t_0$ is the empty string
  and, for each $i\in\{1,\ldots,n\}$, define $t_i=s_0,\ldots,s_{2i-1}$.

  Now consider the parity vectors $P(t_0),P(t_1),\ldots,P(t_n)$.
  Each of these $n+1$ vectors is a binary string of length
  $|\Sigma| < \log(n+1)$ therefore, there must exist two indices
  $i,j\in\{0,\ldots,n\}$ with $i<j$ such that $P(t_i)=P(t_j)$.  However,
  \[
      P(t_j) = P(t_i) \oplus P(s_{2i},\ldots,s_{2j-1}) 
  \]
  and since $P(t_i)=P(t_j)$, this implies that $P(s_{2i},\ldots,s_{2j-1})=\langle0,\ldots,0\rangle$
  and $s_{2i},\ldots,s_{2j-1}$ is even, as required.
\end{proof}

The next lemma says that if we split an even string into consecutive
pairs, then we can can colour one element of each pair red and the other
blue in such a way that the resulting red and blue multisets are exactly
the same.
\begin{lem}\lemlabel{splitting}
  Let $s=s_0,\ldots,s_{2r-1}\in\Sigma^{2r}$ be an even string. Then there
  exists a binary sequence $v_0,\ldots,v_{r-1}$ such that the string
  $s_v=s_{0+v_0},s_{2+v_1},\ldots,s_{2(r-1)+v_{r-1}}$ 
  has $n_a(s_v)=n_a(s)/2$
  for all $a\in\Sigma$.
\end{lem}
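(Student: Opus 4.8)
The plan is to recast the statement as an Eulerian-orientation problem on an auxiliary multigraph. Form a multigraph $H$ with vertex set $\Sigma$ by adding, for each $i\in\{0,\ldots,r-1\}$, one edge $e_i$ with endpoints $s_{2i}$ and $s_{2i+1}$; when $s_{2i}=s_{2i+1}$ this edge is a loop. Counting a loop at a vertex $a$ as contributing $2$ to the degree of $a$, the degree of $a$ in $H$ is exactly $n_a(s)$, which is even for every $a$ because $s$ is even. By Veblen's theorem, a multigraph in which every vertex has even degree decomposes into edge-disjoint cycles (a loop counting as a length-one cycle); orienting each cycle cyclically produces an orientation $\vec H$ of $H$ in which every vertex has in-degree equal to out-degree, and hence in-degree $n_a(s)/2$ at each vertex $a$.

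Next I would read off the sequence $v_0,\ldots,v_{r-1}$ from $\vec H$. For each $i$, choose $v_i\in\{0,1\}$ so that $s_{2i+v_i}$ is the head of the oriented edge $e_i$ (if $e_i$ is a loop, both choices give the same symbol, so set $v_i=0$). Then, by construction, the string $s_v=s_{0+v_0},\ldots,s_{2(r-1)+v_{r-1}}$ lists precisely the heads of the edges of $\vec H$, one per edge. Consequently $n_a(s_v)$ equals the number of edges of $\vec H$ whose head is $a$, which is the in-degree of $a$ in $\vec H$, namely $n_a(s)/2$, as required.

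The substantive content here is entirely in the reformulation; no real computation is involved. The only point that needs care is the bookkeeping for self-pairs $s_{2i}=s_{2i+1}$: such a pair produces a loop, which must contribute $2$ to $n_a(s)$, $1$ to the in-degree of $a$ in $\vec H$, and $1$ to $n_a(s_v)$, so that the chain of equalities in-degree $=n_a(s)/2=n_a(s_v)$ is preserved. Everything else — Veblen's cycle-decomposition theorem, and the fact that a cyclic orientation of each cycle balances in- and out-degrees — is standard, so I expect this loop bookkeeping (together with stating the degree count $\deg_H(a)=n_a(s)$ under the correct loop convention) to be the only place where one can slip.
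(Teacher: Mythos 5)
Your proof is correct. The loop bookkeeping you flag is indeed the only delicate point, and you handle it consistently: with a loop counting $2$ toward $\deg_H(a)=n_a(s)$ but only $1$ toward in-degree and $1$ toward $n_a(s_v)$, the chain $n_a(s_v)=\mathrm{indeg}(a)=\deg_H(a)/2=n_a(s)/2$ goes through, and Veblen's theorem (or, equivalently, taking an Eulerian orientation of each component after setting loops aside as trivial cycles) applies since removing loops does not change degree parities. However, your route is genuinely different from the paper's. The paper argues by minimal counterexample combined with an exchange argument: it takes a shortest counterexample $s$, picks $v$ minimizing the imbalance $\sum_{a\in\Sigma}|n_a(s_v)-n_a(s_{\overline{v}})|$, and then builds a chain of indices $j_1,j_2,\ldots$ (flipping $v_{j_1},\ldots,v_{j_{k-1}}$ would otherwise reduce the imbalance) until a repetition $a_{k'}=a_k$ yields a set of pairs whose removal gives a shorter counterexample --- essentially an augmenting-path argument carried out by hand inside the string. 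Your reformulation packages exactly that combinatorics into the standard fact that an even multigraph has a balanced (in-degree $=$ out-degree) orientation: each pair $(s_{2i},s_{2i+1})$ becomes an edge, and choosing $v_i$ means choosing the head of that edge. What you gain is brevity and transparency --- the swap/flip machinery of the paper is replaced by one citation and a one-line degree count; what the paper's version buys is complete self-containedness, using no external theorem. Both are valid proofs of the lemma as stated.
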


\begin{proof}
  Suppose for the sake of contradiction that the lemma is not true, and
  let $s$ be the shortest counterexample.  For $v\in\{0,1\}^{r}$, let
  $s_{\overline{v}}=s_{0+1-v_0},s_{2+1-v_1},\ldots,s_{2(r-1)+1-v_{r-1}}$
  be the complement of $s_v$.  Let $v\in\{0,1\}^{r}$ be the binary vector
  that minimizes
  \begin{equation}\eqlabel{minimizer}
     \sum_{a\in\Sigma}|n_a(s_v)-n_a(s_{\overline{v}})| \enspace .
  \end{equation}
  Since $s$ is a counterexample to the lemma, \eqref{minimizer} is
  greater than zero.

  For each $j\in\{0,\ldots,r-1\}$, let $x_{j}=s_{2j+v_j}$ and
  let $y_j=s_{2j+1-v_j}$ so that $s_v=x_0,\ldots,x_{r-1}$ and
  $s_{\overline{v}}=y_0,\ldots,y_{r-1}$.  Since \eqref{minimizer}
  is non-zero, there exists some $j_1$ such that $n_{x_{j_1}}(s_v) >
  n_{x_{j_1}}(s_{\overline{v}})$.  This means that $n_{y_{j_1}}(s_v) \ge
  n_{y_{j_1}}(s_{\overline{v}})$, otherwise flipping\footnote{Here and
  throughout, flipping a binary variable $b$ means changing its value to $1-b$.}
  $v_{j_1}$ would decrease \eqref{minimizer} by two.  Furthermore,
  $y_{j_1}\neq x_{j_1}$ since, otherwise, we could remove $s_{2j}$
  and $s_{2j+1}$ from $s$ and obtain a smaller counterexample, since
  the value of $v_j$ has no effect on \eqref{minimizer}.

  Refer to \figref{swapping}.
  Let $a_1=x_{j_1}$ and for $k=2,3,4\ldots$, define $a_k = y_{j_{k-1}}$
  and define $j_k$ to be any index such that $x_{j_k}=a_k$.  Notice that
  that $n_{a_k}(s_v)\ge n_{a_{k}}(s_{\overline{v}})$ since, otherwise,
  flipping $v_{j_1},\ldots,v_{j_{k-1}}$ would decrease the value of
  \eqref{minimizer}.  Indeed, flipping $v_{j_1},\ldots,v_{j_{k-1}}$
  decreases $n_{a_1}(s_v)$ by one, increases $n_{a_k}(s_v)$ by one, and
  does not change $n_a(s_v)$ for any $a\in\Sigma \setminus \{a_1,a_k\}$.
  This implies that $j_{k}$ is well-defined since
  $n_{a_k}(s_v)\ge n_{a_k}(s_{\overline{v}})\ge 1$.

  \begin{figure}
    \begin{center}
       \includegraphics{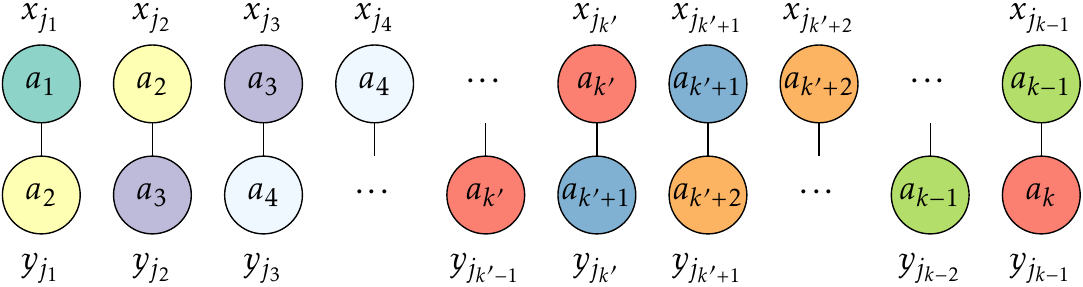}
    \end{center}
    \caption{The proof of \lemref{splitting}.}
    \figlabel{swapping}
  \end{figure}

  Since $s$ is finite, there is some minimum value $k$ such that
  $a_{k}=a_{k'}$ for some $k' < k$.  This defines a sequence of indices
  $j_{k'},\ldots,j_{k-1}$ such that
  \begin{enumerate}
     \item $a_{k'}=x_{j_{k'}}=y_{j_{k-1}}=a_{k}$; 
     \item $a_{\ell}=y_{j_{\ell-1}}=x_{j_\ell}$ for all $\ell\in\{k'+1,\ldots,k-1\}$.
  \end{enumerate}
  In words, for each $\ell\in\{k',\ldots,k\}$, each occurrence of $a_\ell$ in $s_v$ is matched with a corresponding
  occurrence of $a_\ell$ in $s_{\overline{v}}$.
  We claim that this contradicts
  the minimality of $s$. Indeed, by removing
  $s_{2_{j_{k'}}},s_{2j_{k'}+1},s_{2j_{k'+1}},s_{2j_{k'+1}+1},\ldots,s_{2j_{k-1}},s_{2j_{k-1}+1}$
  from $s$ we obtain a smaller counterexample.
\end{proof}

\begin{proof}[Proof of \thmref{main}]
  Let $G$ be the graph with vertex set
  $V(G)=\{x_1,\ldots,x_{n},y_1,\ldots,y_{n}\}$ and with edge set
  \[
    E(G) = \{x_iy_i : i\in\{0,\ldots,n-1\}\} 
              \cup \bigcup_{i=0}^{n-2} \{x_iy_{i+1},x_{i+1}y_i\} \enspace .
  \]
  The graph $G$ has pathwidth 3 as can be seen from the path decomposition
  $B_0,\ldots,B_{n-2}$ where $B_i=\{v_i,w_i,v_{i+1},w_{i+1}\}$.  See
  \figref{graph}. Although not immediately obvious from \figref{graph},
  $G$ is also planar---see \figref{planar}.
  
  \begin{figure}
    \begin{center}
      \includegraphics{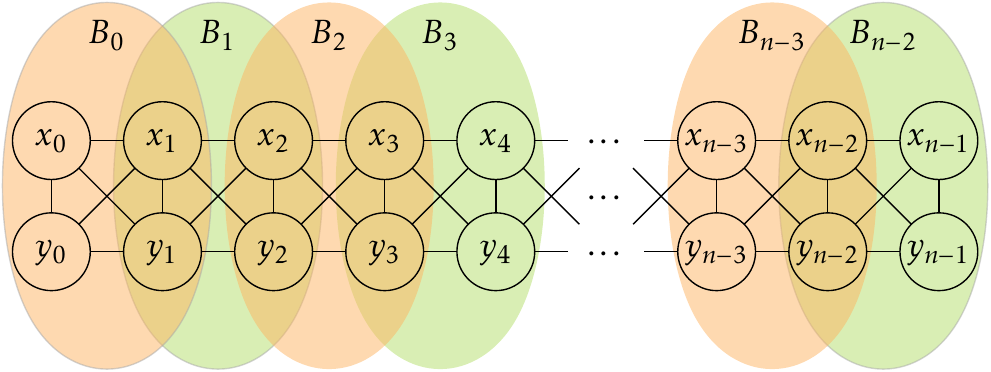}
    \end{center}
    \caption{The graph $G$ in the proof of \thmref{main}.}
    \figlabel{graph}
  \end{figure}
  
  \begin{figure}
    \begin{center}
      \includegraphics[width=\textwidth]{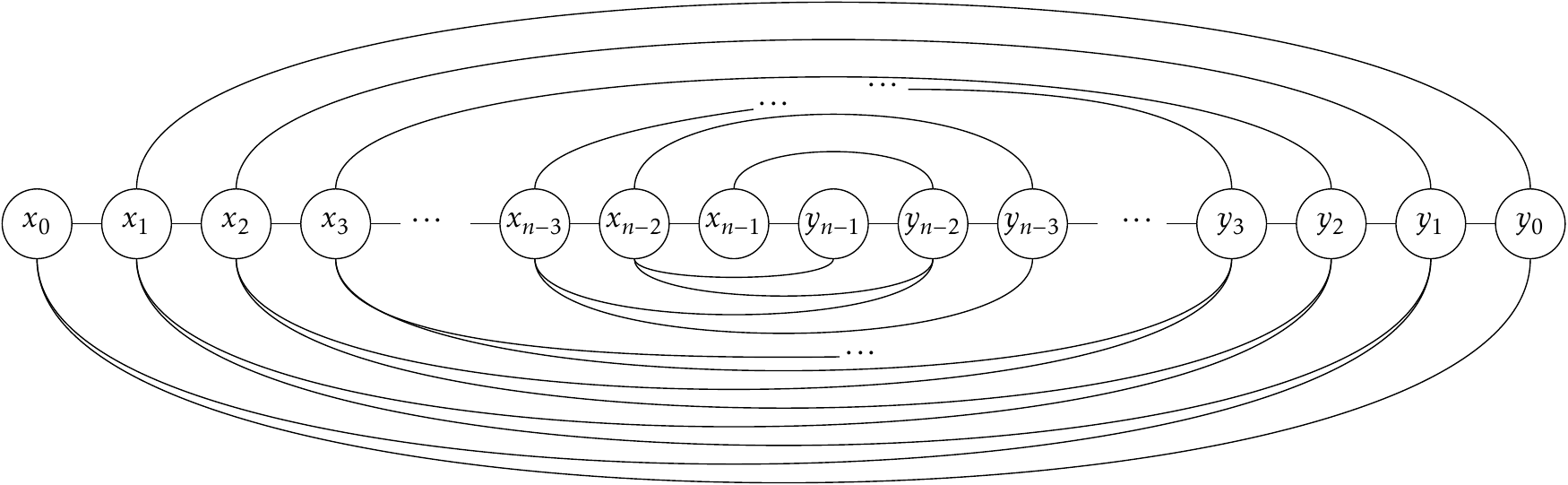}
    \end{center}
    \caption{The graph $G$ in the proof of \thmref{main} is planar
     and is even a 2-page graph.}
    \figlabel{planar}
  \end{figure}
  
  Now, consider some colouring $\varphi:V(G)\to\Sigma$
  with $|\Sigma| < \log(n+1)$.  Applying \lemref{parity} to
  the string 
  $s=\varphi(x_1),\varphi(y_1),\ldots,\varphi(x_n),\varphi(y_n)$ we
  conclude that there is some $i < j$ such that 
  $\varphi(x_{i}),\varphi(y_i),\ldots,\varphi(x_j),\varphi(y_j)$
  is even.  By \lemref{splitting} and the
  symmetry between each $x_i$ and $y_i$ we can assume that
  $n_a(\varphi_(x_i),\ldots,\varphi(x_j))=n_a(\varphi(y_i),\ldots,\varphi(y_j))$
  for each $a\in\Sigma$.  But then the path
  $x_i,\ldots,x_j,y_j,y_{j-1},\ldots,y_i$ has a colour sequence
  $\varphi(x_i),\ldots,\varphi(x_j),\varphi(y_j),\varphi(y_{j-1}),\ldots,\varphi(y_i)$
  that is an anagram.
\end{proof}

\section{Proof of \thmref{main-2}}

\begin{lem}\lemlabel{marriage}
   For every sequence of sets $X_1,\ldots,X_n\subseteq\Sigma$, each of
   size $k>2$, with $|\Sigma|< (k-2)\log(n/3)$, there exists indices
   $1\le i< j\le n$ and subsets $X_i',\ldots,X_j'$ such that, for each
   $\ell\in\{i,\ldots,j\}$,  $X_\ell'\subseteq X_\ell$, $|X_\ell'|\ge 2$
   and, for each $a\in\Sigma$ the number of subsets in $X_i',\ldots,X_j'$
   that contain $a$ is even.
\end{lem}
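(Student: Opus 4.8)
The plan is to recast the statement as a pigeonhole fact about a walk in the cube $\{0,1\}^\Sigma$. For $Y\subseteq\Sigma$ let $P(Y)\in\{0,1\}^\Sigma$ be its characteristic vector, so the conclusion asks for an interval $\{i,\dots,j\}$ and subsets $X'_\ell\subseteq X_\ell$ with $|X'_\ell|\ge 2$ and $\bigoplus_{\ell=i}^{j}P(X'_\ell)=\langle 0,\dots,0\rangle$. The key reduction is this: if we commit \emph{in advance} to a choice $X'_1,\dots,X'_n$ of $2$-or-more-element subsets and set $Q_m=\bigoplus_{\ell\le m}P(X'_\ell)$ for $m=0,\dots,n$, then any coincidence $Q_{i-1}=Q_j$ with $0\le i-1<j\le n$ produces the required interval $\{i,\dots,j\}$ with the committed subsets $X'_i,\dots,X'_j$. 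Hence it suffices to pick the $X'_\ell$ so that the $n+1$ vectors $Q_0,\dots,Q_n$ are not all distinct, and for that it is enough to keep them inside some set of size less than $n$.

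To build such a set, partition $\Sigma$ into $k-2$ blocks $\Sigma_1,\dots,\Sigma_{k-2}$ with $|\Sigma_1|=\lfloor |\Sigma|/(k-2)\rfloor$; the hypothesis $|\Sigma|<(k-2)\log(n/3)$ then yields $2^{|\Sigma_1|}<n/3$. Using only $k-2$ blocks guarantees that every $X_\ell$, having $k$ elements, puts at least two of them into a common block, so at each step we are free to change $Q$ inside a single block; we try to spend this freedom keeping $Q$ supported in $\Sigma_1$. Call $\ell$ \emph{easy} if $|X_\ell\cap\Sigma_1|\ge 2$ and \emph{hard} otherwise. At an easy position take $X'_\ell\subseteq X_\ell\cap\Sigma_1$ with $|X'_\ell|=2$, which leaves $Q$ unchanged outside $\Sigma_1$. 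So as long as only easy positions occur, every $Q_m$ lies in $\{0,1\}^{\Sigma_1}$, a set of size $2^{|\Sigma_1|}<n/3<n$, and pigeonhole finishes. (When $k=3$ we have $\Sigma_1=\Sigma$, every position is easy, and this is already the whole argument—it is just the prefix-sum pigeonhole of \lemref{parity}.)

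The content is in the hard positions, where $X_\ell$ meets $\Sigma_1$ in at most one point and we are forced to perturb $Q$ on $\Sigma\setminus\Sigma_1$. The strategy is to group each hard position together with a bounded number of its successors into a \emph{segment}, and on each segment to choose the subsets so that their net contribution restricted to $\Sigma\setminus\Sigma_1$ cancels, returning $Q$ to $\{0,1\}^{\Sigma_1}$ at the end of the segment. Since a hard $X_\ell$ has $k-1\ge 2$ elements outside $\Sigma_1$, there is room to arrange such cancellation whenever nearby hard sets overlap; the awkward case is a run of consecutive sets that are (nearly) pairwise disjoint, across which no cancellation is possible. This case is tamed by the observation that $t$ pairwise disjoint $k$-sets use $tk$ distinct symbols, so $t\le |\Sigma|/k<\log(n/3)$ and such runs are short. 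The goal is to combine this with a global count of how many positions can sit in "long'' segments to conclude that more than $2^{|\Sigma_1|}$ of the $Q_m$ — roughly one in every three positions, which is precisely where the $3$ in $n/3$ comes from — can be kept inside $\{0,1\}^{\Sigma_1}$, after which two of them coincide and the interval between them is what we want.

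I expect this accounting of the hard positions to be the main obstacle. Easy positions cost nothing, but "$k$ elements in $k-2$ blocks'' only supplies \emph{some} block with two elements, which need not be $\Sigma_1$; turning this into a segmentation with bounded average segment length — the step responsible for the factor $3$ — is the delicate part. If the direct segmentation proves unwieldy, a fallback is to weaken the target, asking only that $Q$ return to within bounded Hamming distance of $\{0,1\}^{\Sigma_1}$ on $\Sigma\setminus\Sigma_1$ and enlarging the tracking set correspondingly, at the price of a larger absolute constant in place of $3$.
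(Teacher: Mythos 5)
Your proposal is a plan rather than a proof: the step you yourself flag as ``the main obstacle''---the segmentation and accounting of hard positions---is exactly the content of the lemma, and nothing in the sketch supplies it. To apply pigeonhole you need more than $2^{|\Sigma_1|}$ (hence up to about $n/3$) prefix vectors $Q_m$ lying in $\{0,1\}^{\Sigma_1}$, i.e.\ an average segment length of roughly $3$; but the only quantitative control you offer on hard stretches is that a run of pairwise disjoint $k$-sets has length at most $|\Sigma|/k<\log(n/3)$. That bounds the length of one such run, not the number of them (different runs can reuse the same symbols of $\Sigma\setminus\Sigma_1$), and in any case ``nearly disjoint'' consecutive sets---sharing, say, one symbol outside $\Sigma_1$---already defeat the local two-element cancellation you describe: each hard set forces a weight-$\ge 2$ perturbation outside $\Sigma_1$, and cancelling it requires chaining overlaps across many positions, which is a global matching/augmenting problem, not a local one. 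Arranging such cancellations so that the walk returns to $\{0,1\}^{\Sigma_1}$ at least $n/3$ times is precisely what has to be proved, and the sketch gives no mechanism for it. The fallback you mention (tracking a Hamming neighbourhood of $\{0,1\}^{\Sigma_1}$ at the cost of a larger constant in place of $3$) would in any case prove a weaker statement than the lemma as stated, whose hypothesis is $|\Sigma|<(k-2)\log(n/3)$ with the constant $3$ fixed.

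For comparison, the paper's proof does not track prefix parities at all. It dichotomizes on a Hall-type expansion condition: if some interval satisfies $|N_{i,j}(I)|\ge |I|/(k-2)$ for every $I\subseteq\Sigma_{i,j}$, an augmenting-path argument produces a map $f:\Sigma\to\{1,\ldots,n\}$ with $|f^{-1}(\ell)|\le k-2$, and one then fixes parities directly by deleting, for each symbol $a$ of odd total count, one occurrence of $a$ from $X_{f(a)}$---each set loses at most $k-2$ elements, so $|X_\ell'|\ge 2$ (note this needs no interval: it works with $i=1$, $j=n$). If instead every interval violates the condition, a recursive interval-shrinking argument shows $|\Sigma|\ge(k-2)\log(n/3)$, contradicting the hypothesis. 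Your hard-position difficulty is, in disguise, the question of which side of this dichotomy you are on, so any completion of your approach would likely have to rediscover the expansion condition; as it stands, the proposal has a genuine gap at its central step.
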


\begin{proof}
   For any $1\le i\le j\le n$, let $\Sigma_{i,j}=\bigcup_{\ell=i}^j
   X_i$ and, for any $I\subset\Sigma_{i,j}$, let
   $N_{i,j}(I)=\{\ell\in\{i,\ldots,j\}: X_\ell\cap I\neq\emptyset \}$.
   We distinguish between two cases.

   \paragraph{Case 1:}
   There is some pair of indices $1\le i\le j\le n$ such that, for every
   $I\subseteq\Sigma_{i,j}$, 
   \begin{equation}
       |N_{i,j}(I)| \ge |I|/(k-2) \enspace . \eqlabel{hall}
   \end{equation}
   In this case
   we will show the existence of the desired sets $X_{i}',\ldots,X_{j}'$.
   Without loss of generality, assume $i=1$, $j=n$, and define $N=N_{1,n}$.

   Define a bipartite graph $H$ with vertex
   set $V(H)=\Sigma\cup\{1,\ldots,n\}$ and edge set $E(H)=\{(a,i):
   i\in\{1,\ldots,n\}, a\in X_i\}$.  We will show that $E(H)$ contains a
   subset $E'$ such that each element $a\in\Sigma$ appears exactly once in
   $E'$ and each element of $\{1,\ldots,n\}$ appears at most $k-2$ times
   in $E'$.  That is, $E'$ defines a mapping $f:\Sigma\to\{1,\ldots,n\}$
   in which, for any $i\in\{1,\ldots,n\}$, $|f^{-1}(i)|\le k-2$.

   The existence of the mapping $f$ establishes the lemma since we can
   start with $X_i'=X_i$ for all $i\in\{1,\ldots,n\}$ and then, for each
   $a\in\Sigma$ that appears an odd number of times, we can remove $a$
   from the set $X_{f(a)}'$.  When this process is complete each $X_i'$
   has size at least 2 and each $a\in\Sigma$ occurs in an even number
   of the sets $X_1',\ldots,X_n'$.

   All that remains is to prove the existence of the edge set $E'$, which
   we do using an augmenting paths argument like that used, for example,
   to prove Hall's Marriage Theorem.  Consider an edge set $E'\subseteq
   E(H)$ that contains exactly one edge incident to each $a\in\Sigma$
   and let $f:\Sigma\to\{1,\ldots,n\}$ be the corresponding mapping.
   Then we define
   \[
      \Phi(E') = \sum_{i=1}^n\max\{0, |f^{-1}(i)|-(k-2)\} \enspace .
   \]
   Note that the set $E'$ we hope to find has $\Phi(E')=0$.  Now, select
   some $E'$ that minimizes $\Phi(E')$.  If $\Phi(E')=0$ then we are done,
   so assume by way of contradiction, that $\Phi(E') > 0$.  Thus, there
   exists some index $i_0\in\{1,\ldots,n\}$ such that $|f^{-1}(i_0)|\ge k-1$
   and therefore the set $\Sigma_0=f^{-1}(i_0)$ has size at least $k-1$.
   Therefore, 
   \[
       |N(\Sigma_0)| \ge \left\lceil\frac{|\Sigma_0|}{k-2}\right\rceil \ge 
\left\lceil\frac{k-1}{k-2}\right\rceil = 2 \enspace .
   \]
   In particular, $N(\Sigma_0)\setminus\{i_0\}$ is non-empty.  Let
   $I_0=\{i_0\}$ and observe that each $i_1\in N(\Sigma_0)\setminus I_0$
   must have $|f^{-1}(i_1)|\ge k-2$ since, otherwise we could replace
   the edge $(a_1,i_0)$ with $(a_1,i_1)$ in $E'$ and this would decrease
   $\Phi(E')$.  Let $I_1=N(\Sigma_0)$ and let $\Sigma_1=\bigcup_{i_1\in
   I_i} f^{-1}(i_1)$.  We have just argued that
   \[
        |\Sigma_1|\ge |I_1|(k-2)+1
   \]
   and therefore, 
   \[
          |N(\Sigma_1)| \ge \left\lceil\frac{|\Sigma_1|}{k-2}\right\rceil
            \ge \left\lceil\frac{|I_1|(k-2)+1}{k-2}\right\rceil
            \ge |I_1|+1 \enspace .
   \]
   But now we can continue this argument, defining $I_j=N(\Sigma_{j-1})$
   and $\Sigma_j=\bigcup_{i_j\in I_j} f^{-1}(i_j)$.  Again,
   each $i_j\in I_j\setminus \bigcup_{\ell=1}^{j-1} I_\ell$
   must have $|f^{-1}(i_j)|\ge k-2$, otherwise we can find a path
   $i_0,a_0,i_1,a_1,\ldots,a_{j-1}i_j$ and replace, in $E'$, the edges
   $i_0a_0,\ldots,i_{j-1}a_{j-1}$ with $a_0i_1,a_1i_2,\ldots,a_{j-1}i_j$
   which would decrease $\Phi(E')$.  In this way, we obtain an infinite
   sequence of subsets $I_0,\ldots,I_\infty\subseteq \{1,\ldots,n\}$
   such that $|I_{j}|>|I_{j-1}|$.  This is clearly a contradiction,
   since each $|I_j|$ is an an integer in $\{1,\ldots,n\}$.

   \paragraph{Case 2:}
   For every $1\le i< j\le n$, there exists a set $I\subset\Sigma_{i,j}$
   such that $|N_{i,j}(I)| < |I|/(k-2)$.  In this case, we will show
   that $|\Sigma|\ge (k-2)\log(n/3)$.

   Before jumping into the messy details, we sketch an inductive proof
   that gives the main intuition for why $|\Sigma|\in \Omega(k\log n)$:
   There is some set $I_0\subset\Sigma$ such that $N(I_0)$ partitions
   $\{1,\ldots,n\}$ into $O(|I|/k)$ intervals.  One such interval
   $i_0,\ldots,j_0$ must have size $\Omega(nk/|I|)$.  By induction on $n$,
   $|\Sigma_{i_0,j_0}| = \Omega(k\log (nk/|I|))$.  But $\Sigma_{i_0,j_0}$
   is disjoint from $I$, so 
   \[  |\Sigma| \ge |I| + \Omega(k\log (nk/|I|))
        = |I| + \Omega(k\log n) - O(k\log(|I|/k))
        = \Omega(k\log n) \enspace .
   \]
   The messy details occur when $|I|=k-1$ since then the $|I|$ and
   $-O(k\log(|I|/k)$ terms are close in magnitude.

   Let $n_0=n$, $i_0=1$, $j_0=n$, $\Sigma_0=\Sigma$ and let
   $I_0\subseteq\Sigma_0$ be such that $|N(I_0)| < |I|/(k-2)$.
   For each integer $\ell$ with $n_{\ell-1}\ge 1$, we define 
   \begin{enumerate}
     \item $i_\ell$ and $j_\ell$ such that 
       $i_{\ell-1}\le i_\ell < j_\ell\le j_{\ell-1}$, 
       $\{i_\ell,\ldots,j_\ell\}\cap N_{i_{\ell-1},j_{\ell-1}}(I_{\ell-1})=\emptyset$, 
       and $n_\ell=j_\ell-i_\ell+1$ is maximized.
     \item $I_\ell\subset\Sigma_{i_\ell,j_\ell}$ such that
      $|N_{i_\ell,j_\ell}(I_\ell)| < |I_\ell|/k$;
   \end{enumerate}
   In words, $N_{i_{\ell-1},j_{\ell-1}}(I_{\ell-1})$ partitions $i_{\ell-1},\ldots,j_{\ell-1}$
   into intervals and we choose $i_\ell$ and $j_\ell$ to be the endpoints
   of a largest such interval and recurse on that interval using a new
   set $I_\ell$. Letting $y_\ell=|N_{i_\ell,j_\ell}(I_\ell)|$, observe that, for $\ell \ge 1$,
   \[
        n_\ell \ge \frac{n_{\ell-1}-y_{\ell-1}}{y_{\ell-1}+1}
               > \frac{n_{\ell-1}}{y_{\ell-1}+1} - 1 \enspace .
   \]
   By expanding the preceding equation we can easily show that
   \[
           n_{\ell} \ge \frac{n}{\prod_{\tau=0}^{\ell-1}(y_\tau+1)} - 2 \enspace .
   \]
   Note that $n_{\ell+1}$ is defined until $n_{\ell} < 1$ so combining this
   with the preceding equation and taking logs yields
   \begin{equation}
           \sum_{\tau=0}^{\ell-1}(y_\tau+1) > \log(n/3) \eqlabel{constraint}
   \end{equation}
   Finally, observe that the sets $I_0,\ldots,I_{\ell-1}$ are disjoint, so
   \begin{equation}
          |\Sigma| \ge \sum_{\tau=0}^{\ell-1} |I_\tau| >
             \sum_{\tau=0}^{\ell-1} (k-2)y_\tau \enspace . \eqlabel{objective}
   \end{equation}
   Now, minimizing \eqref{objective} subject to \eqref{constraint} and using
   the fact that each $y_\tau \ge 1$ is an integer shows that
   $|\Sigma|\ge (k-2)\log (n/3)$, as desired.  (The minimum is obtained when
   $\ell=\log(n/3)$ and $y_1=y_2=\cdots=y_{\ell-1}=1$.)
\end{proof}

\begin{proof}[Proof of \thmref{main-2}]
The pathwidth $2k-1$ graph, $G$, used in this proof is a natural
generalization of the pathwidth $3$ graph used in the proof of
\thmref{main}.  The $kn$ vertices of $G$ are partitioned in subsets
$V_1,\ldots,V_n$, each size of size $k$.  For each $i\in\{1,\ldots,n\}$,
$V_i$ is a clique and, for each $i\in\{1,\ldots,n-1\}$, every
vertex in $V_i$ is adjacent to every vertex in $V_{i+1}$.  That this
graph has pathwidth $2k-1$ can be seen from the path decomposition
$B_1,\ldots,B_{n-2}$ where each $B_i=\{V_i\cup V_{i+1}\}$.

Suppose we have some colouring $\varphi:V(G)\to\Sigma$, with
$|\Sigma|<(k-2)\log(n/3)$.  Define the sets $X_1,\ldots,X_n$ where
$X_i=\{\varphi(v): v\in V_i\}$.  By \lemref{marriage}, we can find indices
$i\in\{0,\ldots,n-1\}$ and $r>0$ and subsets $V_1',\ldots,V_{r}'$ such
that, for each $\ell\in\{1,\ldots,r\}$,  $V_\ell'\subseteq V_{i+\ell}$, $|V_\ell'|\ge 2$,
and such that each colour $a\in\Sigma$ appears in an even number of
$V_1',\ldots,V_r'$.

Next, label the vertices in $V_1',\ldots,V_r'$ red and blue as
follows.  If $|V_i'|$ is even, then label half its vertices red and
half its vertices blue, arbitrarily.  Let $Q_1,\ldots,Q_t$ denote the
subsequence of $V_1',\ldots,V_r'$ consisting of only sets of odd size
(so the vertices in $Q_1,\ldots,Q_t$ are not labelled red or blue yet).
Then, for odd values of $i$, label $\ceil{|Q_i|/2}$ vertices
of $Q_i$ red and the remaining blue.  For even values of $i$ label
$\floor{|Q_i|/2}$ vertices of $Q_i$ red and the remaining blue.
Observe that, since $\sum_{i=1}^r|V_i'|$ is even, $t$ is also even,
so exactly half the vertices in $\bigcup_{i=1}^r V_i'$ are red and half
are blue.

Now, consider the following perfect bichromatic matching of
$\bigcup_{i=1}^r V_i'$: In every set $V_i'$ of even size we match each
red vertex in $V_i'$ with a blue vertex in $V_i'$.  In each odd size set
$Q_i$, we match $\lfloor|Q_i|/2\rfloor$ red vertices with blue vertices
leaving one vertex $v_i$ unmatched.  This leaves $t$ unmatched vertices
$v_1,\ldots,v_t$ and these vertices alternate colour between red and
blue. To complete the matching, we match $v_{2i}$ with $v_{2i-1}$ for
each $i\in\{1,\ldots,t/2\}$.

Now, treat this matching as a long string $s=x_1,y_1,\ldots,x_q,y_q$
where each $x_i=\varphi(v_i)$, each $y_i=\varphi(w_i)$, and each
$(v_i,w_i)$ is a matched pair of vertices.  Now, applying
\lemref{splitting} to $s$, we obtain two sets of vertices
$V=\{v_1',\ldots,v_q'\}$ and $W=\{w_1',\ldots,w_q'\}$ such that, 
for each $a\in\Sigma$,
$n_a(\varphi(v_1),\ldots,\varphi(v_q))=n_a(\varphi(w_1),\ldots,\varphi(w_q))$.
Thus, all that remains is to show that $G$ contains a path $P$ whose first
half is some permutation of $V$ and whose second half
is some permutation of $W$.  But this is obvious, because,
for each $i\in\{1,\ldots,r\}$, $V_i'$ contains at least one vertex of $V$
and at least one vertex of $W$.  Thus, the path $P$ first visits all the
vertices of $V\cap V_1'$ followed by all the vertices of $V\cap V_2'$,
and so on until visiting all the vertices in $V\cap V_r'$. Next, the path
returns and visits all the vertices in $W\cap V_r'$, $W\cap V_{r-1}'$,
and so on back to $W\cap V_1'$.  The existence of the path $P$ shows
that no colouring of $G$ with fewer than $(k-2)\log(n/3)$ colours is anagram-free, so $\pi_\alpha(G) \ge (k-2)\log(n/3)$.
\end{proof}

\section{Remarks}

We have show that anagram-free chromatic number is not pathwidth-bounded,
even for planar graphs.  The graph we use in the proof of \thmref{main}
is a 2-page graph; it has a book embedding using two pages.  Outerplanar
graphs have a book embedding using a single page.  Is anagram-free
chromatic number pathwidth-bounded for outerplanar graphs?  We do not even know if the $2\times n$ grid has constant anagram-free chromatic number.

\bibliographystyle{plainurl}
\bibliography{anagram}

\end{document}